\newtheorem{theorem}{Theorem}
\newtheorem{proposition}{Proposition}
\newtheorem{lemma}{Lemma}
\newtheorem{corollary}{Corollary}
\theoremstyle{remark}
\newtheorem{definition}{Definition}
\newcommand{\C}{\mathbb{C}}
\newcommand{\D}{\Omega}
\newcommand{\re}{\text{Re}}
\title{Zero products of Toeplitz operators on Reinhardt domains}
\author{\v{Z}eljko \v{C}u\v{c}kovi\'c}
\author{Zhenghui Huo}
\author{S\"{o}nmez \c{S}ahuto\u{g}lu}
\email{Zeljko.Cuckovic@utoledo.edu, Zhenghui.Huo@utoledo.edu, 
Sonmez.Sahutoglu@utoledo.edu}
\address{University of Toledo, Department of Mathematics \& Statistics, 
Toledo, OH 43606, USA}
\subjclass[2010]{Primary  47B35; Secondary 32A36}
\keywords{Toeplitz operator, Reinhardt domain, Bergman space, quasi-homogeneous}
\date{\today}
\begin{document}
\begin{abstract}
Let $\Omega$ be a bounded Reinhardt domain in $\mathbb{C}^n$ and  
$\phi_1,\ldots,\phi_m$ be finite sums of bounded quasi-homogeneous 
functions. We show that if the product of Toeplitz operators 
$T_{\phi_m}\cdots T_{\phi_1}=0$ on the Bergman space on $\Omega$, 
then $\phi_j=0$  for some $j$.
\end{abstract}
\maketitle


\section{Introduction}
Algebraic properties of Toeplitz operators acting on Bergman spaces have 
been intensively studied for the past 30 years.  These problems attracted the 
interests of many operator theorists, partly because they are easy to state but 
quite difficult to solve.  The same problems on the classical Hardy space of the 
unit disk $\mathbb{D}$ were solved by Brown and Halmos in their famous 
paper \cite{BrownHalmos63/64}. A problem that is particularly appealing is 
the zero product problem. Brown and Halmos showed that if the product 
of two Toeplitz operators $T_fT_g = 0$, then either $f=0$ or $g=0$.  In that case, 
we say that the zero product problem has a trivial solution.  The 
question about the zero product of finitely many Toeplitz operators acting 
on the Hardy space was solved much later by Aleman and Vukoti\'c 
\cite{AlemanVukotic09} where they showed the problem also has 
only a trivial solution.

For the Bergman space on $\mathbb{D}$, the zero product problem is still open. 
In their two papers, Ahern and \v{C}u\v{c}kovi\'{c} showed that $T_fT_g = 0$ 
can only happen in the trivial way provided $f$ and $g$ are bounded harmonic 
functions on $\mathbb{D}$ \cite{AhernCuckovic01} or if $f$ and $g$ are bounded 
radial functions \cite{AhernCuckovic04}. The first result was extended by Choe, Lee, 
Nam and Zheng \cite{ChoeLeeNamZheng07} to the case of the polydisk provided 
the symbols are pluriharmonic. Later Choe and Koo \cite{ChoeKoo06} obtained 
the same conclusion for the unit ball in $\C^n$, where the symbols are bounded 
harmonic functions that have continuous extensions to some open set of the 
boundary.  The second result from \cite{AhernCuckovic04} was extended to 
the unit ball by Dong and Zhou, under the assumption that the symbols are 
(separately)  quasi-homogeneous \cite{DongZhou11}.

So far, all the known results for bounded domains show that the zero product 
problem has a trivial solution. At this point we would like to mention two works 
that study the same problem for unbounded domains.  In an interesting paper by 
\c{C}elik and Zeytuncu \cite{CelikZeytuncu16}, the authors construct an unbounded 
domain in $\C^n$ and a Toeplitz operator that is nilpotent. That is, the zero product 
problem has non-trivial solution.  Similarly, Bauer and Le \cite{BauerLe11} produced 
a curious example of three nonzero Toeplitz operators on the Fock space whose product 
is equal to zero. It is worth noting that in both examples of \cite{CelikZeytuncu16} 
and \cite{BauerLe11}, Toeplitz operators have bounded quasi-homogeneous symbols.

In this paper we study zero products of finitely many Toeplitz operators 
acting on the Bergman space of bounded Reinhardt domains in $\C^n$, with the 
assumption that the symbols are finite sums of quasi-homogeneous functions.  
Our results point again in the direction of the trivial solution. But as opposed 
to the unit ball and the polydisk, working on more general Reinhardt domains 
brings new technical difficulties mainly stemming from the lack of explicit 
formulae for the Bergman kernels on general Reinhardt domains. 
Furthermore, additional difficulties come from 
working with finitely many products of Toeplitz operators and choosing the 
symbols that are finite sums of quasi-homogeneous functions. We hope that our 
techniques will bring more light to the study of algebraic properties of Toeplitz 
operators on more general domains in $\C^n$, beyond the unit ball and polydisk.

Our main result is the following theorem. 

\begin{theorem}\label{ThmMain}
Let $\Omega$ be a bounded Reinhardt domain in $\mathbb C^n$ and  
$\phi_1,\ldots,\phi_m$ be finite sums of bounded quasi-homogeneous functions. 
Assume that $T_{\phi_m}\cdots T_{\phi_1}=0$ on $A^2(\Omega)$. 
Then $\phi_j=0$  for some $j$. 
\end{theorem}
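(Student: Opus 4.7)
The plan is a two-tiered reduction: first, pass from ``finite sum of quasi-homogeneous'' symbols to purely quasi-homogeneous symbols by an extremality-plus-induction argument in $\mathbb{Z}^n$; second, handle the quasi-homogeneous case by computing the operator product on monomials and invoking a Mellin-type uniqueness argument. The Reinhardt hypothesis is used twice: to decompose bounded symbols into quasi-homogeneous components via Fourier analysis in the rotation parameter, and to ensure that the admissible monomials form an orthogonal basis of $A^2(\Omega)$.

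For the first tier, decompose $\phi_j = \sum_{k \in S_j} \phi_{j,k}$ where $\phi_{j,k}$ is quasi-homogeneous of multi-degree $k$ and $S_j \subset \mathbb{Z}^n$ is finite. Let $U_\theta$ be the unitary on $A^2(\Omega)$ given by $(U_\theta f)(z) = f(e^{i\theta_1}z_1,\ldots,e^{i\theta_n}z_n)$; this is well defined because $\Omega$ is Reinhardt. A short inner-product calculation shows $U_\theta^{-1} T_\psi U_\theta = e^{-ik\cdot\theta} T_\psi$ whenever $\psi$ is quasi-homogeneous of multi-degree $k$. Conjugating the hypothesis by $U_\theta$ and expanding $T_{\phi_j} = \sum_{k\in S_j} T_{\phi_{j,k}}$ gives
\[
0 = \sum_{(k_1,\ldots,k_m)\in S_1\times\cdots\times S_m} e^{-i(k_1+\cdots+k_m)\cdot\theta}\, T_{\phi_{m,k_m}}\cdots T_{\phi_{1,k_1}}.
\]
Extracting Fourier coefficients in $\theta$ yields, for each $K \in \mathbb{Z}^n$,
\[
\sum_{\substack{(k_1,\ldots,k_m)\in S_1\times\cdots\times S_m \\ k_1+\cdots+k_m=K}} T_{\phi_{m,k_m}}\cdots T_{\phi_{1,k_1}} = 0.
\]
Order $\mathbb{Z}^n$ lexicographically, let $k_j^* = \max_\prec S_j$, and set $K^* = k_1^* + \cdots + k_m^*$. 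A coordinate-by-coordinate check shows $K^*$ admits this as its unique decomposition in $S_1\times\cdots\times S_m$, so the identity for $K=K^*$ collapses to the single term
\[
T_{\phi_{m,k_m^*}}\cdots T_{\phi_{1,k_1^*}} = 0,
\]
reducing matters to the purely quasi-homogeneous case.

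For the quasi-homogeneous step, write $\phi_j(z) = e^{ik_j\cdot\theta}\psi_j(r)$ with $\psi_j$ bounded and radial. Because the admissible monomials form an orthogonal basis of $A^2(\Omega)$, integrating out the angles in the Bergman inner product gives
\[
T_{\phi_j} z^\alpha = \mu_j(\alpha)\, z^{\alpha + k_j}, \qquad \mu_j(\alpha) = \frac{\int_{r(\Omega)} \psi_j(r)\, r^{2\alpha + k_j}\, r_1\cdots r_n\, dr}{\int_{r(\Omega)} r^{2(\alpha + k_j)}\, r_1\cdots r_n\, dr},
\]
whenever $z^\alpha$ and $z^{\alpha+k_j}$ both lie in $A^2(\Omega)$. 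Iterating through the product gives
\[
T_{\phi_m}\cdots T_{\phi_1} z^\alpha = \left(\prod_{j=1}^m \mu_j(\alpha + k_1 + \cdots + k_{j-1})\right) z^{\alpha+K^*},
\]
so the hypothesis forces the shifted-moment product to vanish on the admissible lattice. Each $\mu_j(\beta)$ is a quotient of Mellin-type integrals that extends analytically to a tube domain in $\mathbb{C}^n$, with the denominator nonzero there; a several-variable Carlson-type uniqueness argument propagates the vanishing from the lattice to an open set, after which a product of analytic functions being identically zero forces some $\mu_{j_0}$ to vanish on that set. Mellin uniqueness for radial moment integrals on $r(\Omega)$ then yields $\psi_{j_0} \equiv 0$, i.e.\ $\phi_{j_0, k_{j_0}^*} = 0$.

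To close the loop, induct on $N = \sum_j |S_j|$. If $|S_{j_0}| = 1$, then $\phi_{j_0} = 0$ and we are done; otherwise remove the vanishing component from the representation of $\phi_{j_0}$, which strictly decreases $N$ while preserving the identity $T_{\phi_m}\cdots T_{\phi_1} = 0$, and apply the inductive hypothesis. The principal technical obstacle is the analytic/Mellin-uniqueness step on a general bounded Reinhardt domain: with no explicit formula for the Bergman kernel available, one must carefully establish (i) simultaneous analytic continuation of the $\mu_j$ to a joint tube domain in $\mathbb{C}^n$, (ii) a several-variable Carlson-type propagation of vanishing from a shifted lattice to an open set, and (iii) Mellin uniqueness for radial functions on the multi-dimensional set $r(\Omega)$. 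A secondary subtlety is that the set of admissible exponents need not equal $\mathbb{Z}^n_{\geq 0}$ (for instance on annular Reinhardt domains), so the range of admissible $\alpha$ must be tracked carefully throughout.
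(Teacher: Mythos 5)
Your argument is correct in outline and reaches the theorem by a genuinely different route in its two key technical steps, so a comparison is worthwhile. For the reduction from finite sums to a single quasi-homogeneous symbol, you conjugate by the torus action $U_\theta$, extract operator-valued Fourier coefficients, and use that the lexicographic order on $\mathbb{Z}^n$ is translation-invariant to isolate the single product $T_{\phi_{m,k_m^*}}\cdots T_{\phi_{1,k_1^*}}=0$ of extreme components in one stroke; the paper instead applies the product to monomials $z^{\mathbf k}$, peels off the face of each box $R_j$ where the first coordinate is maximal, and runs a more elaborate induction on the box dimensions with an iterative face-stripping process. Your version is cleaner, needs only finite degree sets rather than boxes, and your induction on $N=\sum_j|S_j|$ closes the loop correctly. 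For the quasi-homogeneous case, you apply the several-variable Carlson/M\"untz uniqueness to the \emph{product} $H=\prod_j h_j$ of the Mellin-type transforms (which vanishes at every admissible lattice point since at each point some factor vanishes) and then invoke the fact that holomorphic functions on the connected domain $\mathbb H_+^n$ form an integral domain to get $h_{j_0}\equiv 0$. This bypasses the paper's main combinatorial machinery (Lemma \ref{lem2} and Corollary \ref{Cor2}, showing that ``condition (I)'' is preserved under finite unions), which the paper needs because it works with the individual zero sets $Z_j$ rather than the product; with your product trick only the full-lattice case $E=\mathbb N^n$ of Lemma \ref{lem1} is required, and that is an easy iteration of the one-variable result. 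Two small points of hygiene: it is cleaner to run the Carlson/Mellin argument on the numerators $\int g_j(t)t^{z}\,dV(t)$ rather than on the quotients $\mu_j$ (one need not worry about where the denominator's analytic continuation vanishes, and the denominators are positive at the lattice points anyway); and the ``technical obstacles'' you flag at the end are all handled by standard devices as in the paper --- rescale so that $\widetilde\Omega^+\subseteq[0,1)^n$ to get boundedness on $\mathbb H_+^n$, use boundedness of $\Omega$ to get $z^{\mathbf k}\in A^2(\Omega)$ for all $\mathbf k\in\mathbb N_0^n$ and choose a threshold $\mathbf k_0$ so that all partial shifts $\mathbf k+\mathbf k_1+\cdots+\mathbf k_j$ are admissible with positive normalizing constants, and finish with Stone--Weierstrass for the moment-uniqueness --- so none of them is a genuine gap.
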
 

In particular, our result shows that, the type of examples in  \cite{BauerLe11} and 
\cite{CelikZeytuncu16} cannot happen on bounded domains. In addition, we obtain 
the following corollary.

\begin{corollary}\label{Cor1}
Let $\D$ be a bounded Reinhardt domain in $\C^n$ and $\phi_1,\ldots,\phi_{m-1}$ 
be bounded quasi-homogeneous symbols on $\D$ and let $\phi_m\in L^\infty(\Omega)$. 
Assume that $T_{\phi_m}\cdots T_{\phi_1}=0$ on $A^2(\D)$. Then $\phi_j=0$ 
for some $j$. 
\end{corollary}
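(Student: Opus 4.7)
The plan is to reduce Corollary~\ref{Cor1} to Theorem~\ref{ThmMain} by averaging the symbol $\phi_m$ against the natural $\mathbb{T}^n$-action on the Reinhardt domain and applying the main theorem to each quasi-homogeneous Fourier component of $\phi_m$. If $\phi_j=0$ for some $j<m$ there is nothing to prove, so I assume every $\phi_j$ with $j<m$ is nonzero and quasi-homogeneous of some multi-degree $k_j\in\mathbb{Z}^n$; the goal becomes to show $\phi_m=0$. For each $k\in\mathbb{Z}^n$ I define the $k$-th quasi-homogeneous component of $\phi_m$,
\[
\phi_m^{(k)}(z) := \frac{1}{(2\pi)^n}\int_{[0,2\pi]^n} \phi_m(e^{i\theta}z)\,e^{-ik\cdot\theta}\,d\theta,
\]
which is bounded with $\|\phi_m^{(k)}\|_\infty\le\|\phi_m\|_\infty$ and quasi-homogeneous of degree $k$.

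Writing $R_\theta(z)=e^{i\theta}z$ and $U_\theta f=f\circ R_\theta$, the rotation invariance of the Bergman kernel on $\D$ gives $U_\theta T_\phi U_\theta^{-1}=T_{\phi\circ R_\theta}$ for any bounded $\phi$. Conjugating $T_{\phi_m}\cdots T_{\phi_1}=0$ by $U_\theta$, and using $\phi_j\circ R_{-\theta}=e^{-ik_j\cdot\theta}\phi_j$ for $j<m$, lets me cancel the nonzero scalar $e^{-i(k_1+\cdots+k_{m-1})\cdot\theta}$ to obtain
\[
T_{\phi_m\circ R_{-\theta}}\,T_{\phi_{m-1}}\cdots T_{\phi_1}=0 \qquad\text{for every } \theta\in[0,2\pi]^n.
\]
Multiplying by $e^{ik\cdot\theta}$ and integrating in $\theta$, a Fubini argument identifies the averaged operator $\frac{1}{(2\pi)^n}\int T_{\phi_m\circ R_{-\theta}}\,e^{ik\cdot\theta}\,d\theta$ with $T_{\phi_m^{(k)}}$ on $A^2(\D)$, and hence $T_{\phi_m^{(k)}}T_{\phi_{m-1}}\cdots T_{\phi_1}=0$ for every $k\in\mathbb{Z}^n$.

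Since each $\phi_m^{(k)}$ is a single bounded quasi-homogeneous function (trivially a finite sum of such), Theorem~\ref{ThmMain} applies to this last product; because no $\phi_j$ with $j<m$ vanishes, it forces $\phi_m^{(k)}=0$ for every $k$. All quasi-homogeneous Fourier coefficients of $\phi_m$ then vanishing gives $\phi_m=0$ in $L^2(\D)$ and hence almost everywhere, completing the argument. The step most in need of verification---and essentially the only technical obstacle---is the operator-valued integration producing $T_{\phi_m^{(k)}}$; this reduces by Fubini to the scalar identity $\frac{1}{(2\pi)^n}\int \langle(\phi_m\circ R_{-\theta})f,g\rangle\, e^{ik\cdot\theta}\,d\theta = \langle\phi_m^{(k)}f,g\rangle$ for all $f,g\in A^2(\D)$, which follows from the definition of $\phi_m^{(k)}$ since the integrand is dominated by $\|\phi_m\|_\infty|f\,\overline{g}|\in L^1(\D)$.
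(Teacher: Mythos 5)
Your argument is correct. It rests on the same underlying decomposition as the paper's proof---expanding $\phi_m$ into quasi-homogeneous Fourier components and showing each one vanishes---but the mechanism for isolating the components is genuinely different. The paper applies the product to a monomial $z^{\mathbf k}$, expands $\phi_m=\sum_{\mathbf p}f_{m,\mathbf p}(r)e^{i\mathbf p\cdot\theta}$ in $L^2$, observes that the resulting terms are constant multiples of the pairwise distinct monomials $z^{\mathbf k+\mathbf p+\mathbf k_1+\cdots+\mathbf k_{m-1}}$, concludes each term vanishes by linear independence, and then reruns the argument of Proposition \ref{Prop1} on each term. You instead conjugate the operator identity by the rotation unitaries $U_\theta$, use that quasi-homogeneous symbols are eigenvectors of this action to strip off a nonvanishing scalar, and average against $e^{ik\cdot\theta}$ to obtain $T_{\phi_m^{(k)}}T_{\phi_{m-1}}\cdots T_{\phi_1}=0$ directly at the operator level, after which Proposition \ref{Prop1} finishes (citing Theorem \ref{ThmMain} is also legitimate and not circular, since its proof does not use the corollary, but the proposition suffices). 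Your route buys a cleaner, coordinate-free reduction that never touches the kernel expansion, at the cost of having to justify $U_\theta T_\phi U_\theta^{-1}=T_{\phi\circ R_\theta}$ and the weak operator-valued integral, both of which you handle correctly. Two small points deserve a line each in a final write-up: the measurability of $z\mapsto\phi_m^{(k)}(z)$, so that it is a legitimate $L^\infty$ symbol (this is the content of the lemma of Le that the paper cites), and the fact that $\phi_m^{(k)}=0$ for all $k$ forces $\phi_m=0$, which is again the $L^2$ Fourier expansion.
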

In case there are two or more $L^{\infty}$ symbols, the proof of Corollary \ref{Cor1} 
would involve multiple interacting series. Because of this difficulty, the zero product 
problem for Toeplitz operators is still open even on the unit disc for multiple 
$L^{\infty}$ symbols. We believe new ideas are needed in that case.

\section{Preliminaries} 

We start this section by some basic definitions. Let $\D$ be a domain in $\C^n$. 
The space of square integrable holomorphic functions on $\D$, the 
Bergman space on $\D$, is denoted by $A^2(\D)$. Since $A^2(\D)$ is a closed 
subspace of $L^2(\D)$ there exists an orthogonal projection $P:L^2(\D)\to A^2(\D)$, 
called the Bergman projection of $\D$. The Toeplitz operator $T_{\phi}$, with 
symbol $\phi\in L^{\infty}(\D)$, is defined as $T_{\phi}f=P(\phi f)$ for all 
$f\in A^2(\D)$. 

In this paper we are focusing on  bounded Reinhardt domains and products of 
Toeplitz operators with symbols that are finite sums of quasi-homogeneous symbols. 
So we will define these notions next. A domain $\D$ is called Reinhardt if 
$(e^{i\theta_1}z_1,\ldots,  e^{i\theta_n}z_n)\in \D$ whenever $z=(z_1,\ldots, z_n)\in \D$ and 
$\theta_j\in \mathbb{R}$ for all $j$. A function $\phi$ is called (separately) 
quasi-homogeneous if there exists $f:[0,\infty)^n\to \C$, $(k_1,\ldots,k_n)\in \mathbb{Z}^n$ 
such that 
\[\phi(r_1e^{i\theta_1},\ldots,r_ne^{i\theta_n})
=f(r_1, \ldots,r_n)e^{i(k_1\theta_1+\cdots +k_n\theta_n)}.\]  
We note that such functions are called separately quasihomogeneous in \cite{DongZhou11}. 
Next we introduce a condition for a set of multi-indices motivated by \cite{DongZhou11}. 
\begin{definition}\label{de1}
Let $\mathbb{N}_0=\mathbb{N}\cup\{0\}$ and $E$ be a subset of $\mathbb N^n_0$. 
Let $\pi_j$ be the projection of the multi-indices in $\mathbb{N}_0^n$ onto the $j$th coordinate: 
$\pi_j(a_1,\dots,a_n)=a_j$. We  use $E(a_1,\dots,a_j)$ to denote the fiber in $E$ 
with its first $j$ components being $a_1,\dots,a_j$, respectively. That is, 
\[E(a_1,\dots,a_j)=\{\mathbf a\in E:\pi_k(\mathbf a) 
=a_k \text{ for } k=1,2,\dots,j\}.\] 
\begin{itemize}
\item[(i).] We say the fiber $E(a_1,\dots,a_j)$ is \textit{thick} 
if $\pi_{j+1}(E(a_1,\dots,a_j))\cap \mathbb{N}\neq \emptyset$ and 
\[\sum_{k\in \pi_{j+1}(E(a_1,\dots,a_j))\cap \mathbb{N}}\frac{1}{k}=\infty.\] 
We say $E(a_1,\dots,a_j)$ is a \textit{thin} fiber if it is not \textit{thick}.
\item[(ii).] 	We say $E$ satisfies condition (I) if $E$ contains a subset
$\widehat{E}\subset \mathbb{N}^n$ satisfying the following conditions: 
\begin{enumerate} 
\item the sum $\sum_{k\in \pi_1(\widehat{E})}1/k=\infty$, 
\item  for  $1\leq j< n$, and any $j$-tuple $(a_1,\dots,a_j)\in \mathbb N^j$, 
if its corresponding fiber in $\widehat{E}$ 
\[\widehat{E}(a_1,\dots,a_j) 
:=\{\mathbf a\in \widehat{E}:\pi_k(\mathbf a)=a_k \text{ for } k=1,2,\dots,j\}\] 
is nonempty, then $\widehat{E}(a_1,\dots,a_j)$ is thick. 
\end{enumerate}
\end{itemize}
\end{definition}

In case $n=1$, condition (I) above reduces to the well known Blaschke condition 
in the following classical result (see \cite[Page 102]{RemmertBook}).
\begin{theorem}
Let $f$ be a bounded holomorphic function on  right half plane, 
$\mathbb H_+=\{z\in\mathbb C:Re(z)>0\}.$ If the set $E=\{\alpha\in \mathbb{N}:f(\alpha)=0\}$ 
satisfies $\sum_{k\in E\backslash\{0\}}1/k=\infty$, then $f$ vanishes identically on $\mathbb H_+$.
\end{theorem}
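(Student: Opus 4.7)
The plan is to reduce the statement to the classical Blaschke condition for bounded holomorphic functions on the unit disk, by pulling everything back through a Cayley-type conformal map between $\mathbb{H}_+$ and $\mathbb{D}$.

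First I would fix the biholomorphism $\varphi:\mathbb{H}_+\to\mathbb{D}$ given by $\varphi(z)=(z-1)/(z+1)$, which maps the right half plane onto the open unit disk. Setting $g=f\circ\varphi^{-1}$, I obtain a bounded holomorphic function on $\mathbb{D}$ with $\|g\|_\infty=\|f\|_\infty$. The image of a positive integer $k\in\mathbb{N}$ under $\varphi$ is the real point $w_k=(k-1)/(k+1)\in[0,1)$, and $f(k)=0$ if and only if $g(w_k)=0$.

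Next I would compute $1-|w_k|=1-(k-1)/(k+1)=2/(k+1)$, so for any subset $E\subset\mathbb{N}$ the series $\sum_{k\in E\setminus\{0\}}(1-|w_k|)$ differs from $\sum_{k\in E\setminus\{0\}}1/k$ by a bounded multiplicative factor and a convergent tail; in particular the two series diverge simultaneously. Thus, under the hypothesis $\sum_{k\in E\setminus\{0\}}1/k=\infty$, the zero set $\{w_k:k\in E\setminus\{0\}\}$ of $g$ in $\mathbb{D}$ violates the Blaschke condition $\sum(1-|a_j|)<\infty$.

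Finally I would invoke the classical theorem that a bounded (indeed $H^\infty$) holomorphic function on $\mathbb{D}$ whose zero set fails the Blaschke condition must be identically zero. This forces $g\equiv 0$ on $\mathbb{D}$, and hence $f=g\circ\varphi\equiv 0$ on $\mathbb{H}_+$. There is no real obstacle here beyond correctly verifying the divergence equivalence between $\sum 1/k$ and $\sum(1-|w_k|)$, since the substantial analytic content (the Blaschke condition for bounded functions on $\mathbb{D}$) is a classical fact; the only care needed is to ensure that the conformal change of variables transports the boundedness and the zero set faithfully, which it does because $\varphi$ is a biholomorphism.
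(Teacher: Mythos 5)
Your argument is correct and is precisely the classical proof of this fact: the paper itself gives no proof, citing it as a known result from Remmert (p.\ 102), and the standard derivation there is exactly your route via the Cayley transform $z\mapsto(z-1)/(z+1)$ and the failure of the Blaschke condition for the images $w_k=(k-1)/(k+1)$, using $1-|w_k|=2/(k+1)\asymp 1/k$. No gaps; the only point worth making explicit is that it suffices for a \emph{subset} of the zero set of $g$ to violate the Blaschke condition, which you implicitly and correctly use.
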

Using condition (I) in Definition \ref{de1}, we have the following lemma as 
a higher dimensional analogue of the theorem above. 

\begin{lemma}\label{lem1}
Let $f$ be a bounded holomorphic function on the product of  right half planes 
\[\mathbb H_+^n=\{z\in \mathbb C^n:\re (z_j)>0, j=1,\dots,n\}.\]
If the set $E=\{\alpha\in \mathbb{N}^n:f(\alpha)=0\}$ satisfies 
condition (I), then $f$ vanishes identically on $\mathbb H_+^n$.
\end{lemma}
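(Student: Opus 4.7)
The plan is to proceed by induction on the dimension $n$. The base case $n=1$ will be exactly the classical Blaschke-type theorem stated just above the lemma: clause~(2) of condition~(I) is vacuous when $n=1$, so condition~(I) simply reduces to $\sum_{k\in\pi_1(\widehat E)}1/k=\infty$, which is precisely the hypothesis of that theorem.

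For the inductive step, I would assume the lemma in dimension $n-1$ and let $\widehat E\subset\mathbb N^n$ be a witness of condition~(I) for $E$. My first move is to slice in the last variable: for any $(a_1,\ldots,a_{n-1})\in\mathbb N^{n-1}$ with $\widehat E(a_1,\ldots,a_{n-1})\neq\emptyset$, clause~(2) applied with $j=n-1$ makes this fiber thick, so $\sum_{k\in\pi_n(\widehat E(a_1,\ldots,a_{n-1}))\cap\mathbb N}1/k=\infty$. The function $z_n\mapsto f(a_1,\ldots,a_{n-1},z_n)$ is bounded holomorphic on $\mathbb H_+$ and vanishes on that set, so the base case forces $f(a_1,\ldots,a_{n-1},z_n)=0$ for every $z_n\in\mathbb H_+$.

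Next I would freeze $z_n$ and drop into dimension $n-1$. For each fixed $z_n\in\mathbb H_+$, the function $F_{z_n}(z_1,\ldots,z_{n-1}):=f(z_1,\ldots,z_{n-1},z_n)$ is bounded holomorphic on $\mathbb H_+^{n-1}$ and, by the previous step, vanishes on the projection $\widehat E':=\{(a_1,\ldots,a_{n-1})\in\mathbb N^{n-1}:\widehat E(a_1,\ldots,a_{n-1})\neq\emptyset\}$. To invoke the inductive hypothesis I need $\widehat E'$ to satisfy condition~(I) in $\mathbb N^{n-1}$: clause~(1) transfers because $\pi_1(\widehat E')=\pi_1(\widehat E)$, and for $1\le j< n-1$ the identity $\pi_{j+1}(\widehat E'(a_1,\ldots,a_j))=\pi_{j+1}(\widehat E(a_1,\ldots,a_j))$ moves both nonemptiness and thickness from $\widehat E$ onto $\widehat E'$. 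The induction will then give $F_{z_n}\equiv 0$ on $\mathbb H_+^{n-1}$, and since $z_n\in\mathbb H_+$ was arbitrary, $f\equiv 0$ on $\mathbb H_+^n$.

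The main obstacle here is conceptual rather than analytic: the whole argument hinges on recognizing that condition~(I) was designed precisely to permit this layered slicing. Thickness at the deepest available level is what triggers the one-variable Blaschke vanishing, while stability of condition~(I) under the projection $\mathbb N^n\to\mathbb N^{n-1}$ is what keeps the induction turning. No quantitative estimates beyond the cited one-variable theorem should be required.
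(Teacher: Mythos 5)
Your proposal is correct and follows essentially the same route as the paper: apply the one-variable Blaschke theorem in the last variable on each thick fiber $\widehat E(a_1,\dots,a_{n-1})$, then peel back through the coordinates using the fact that thickness of the lower-level fibers is preserved. The only difference is presentational — you package the paper's ``repeating this process'' as a formal induction on $n$, with an explicit (and correct) check that the projection $\widehat E'\subset\mathbb N^{n-1}$ inherits condition (I).
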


\begin{proof} 
Since $E$ satisfies condition (I) there exists a subset $\widehat{E}$ of 
$E\cap \mathbb{N}^n$ such that for any fixed multi-index $(a_1,\dots,a_{n-1})$ 
with $\widehat{E}(a_1,\dots,a_{n-1})\neq \emptyset$, there exists a sequence 
$\{a^{(l)}_n\}$ such that $(a_1,\dots,a_{n-1},a^{(l)}_n)\in \widehat{E}$  for each $l$ 
and $\sum_{l=1}^{\infty}1/a^{(l)}_n=\infty$. Then $f(a_1,\dots,a_{n-1},z_n)\equiv0$ 
on $\mathbb H_+^1$ for any fixed multi-index $(a_1,\dots,a_{n-1})$ with 
$\widehat{E}(a_1,\dots,a_{n-1})\neq \emptyset$ (see \cite[Page 102]{RemmertBook} 
and  \cite[Theorem 2.3]{DongZhou11}). Since  
\[\widehat{E}(a_1,\dots,a_{n-2})\supset 
 \widehat{E}(a_1,\dots,a_{n-2},a_{n-1}) \neq \emptyset,\] 
there also exists a sequence $\{a^{(l)}_{n-1}\}$ such that 
$\widehat{E}(a_1,\dots,a_{n-2},a^{(l)}_{n-1})\neq \emptyset$ 
and $\sum_{l=1}^{\infty}1/a^{(l)}_{n-1}=\infty$. Thus for every fixed $z_n\in \mathbb H_+^1$
 and every $(a_1,\dots,a_{n-2})$ with $\widehat{E}(a_1,\dots,a_{n-2})\neq \emptyset$ we have  
$f(a_1,\dots,a_{n-2},z_{n-1},z_n)\equiv0$ on $\mathbb H_+^1$. Repeating this process yields 
$f(z)\equiv0$ on $\mathbb H^n_+$.
\end{proof}
\begin{lemma}\label{lem2}
Let $Z_1$ and $Z_2$ be subsets of $\mathbb N^n_0$ such that $Z_1\cup Z_2$ 
satisfies condition (I). Then $Z_1$ or $Z_2$ satisfy condition (I).
\end{lemma}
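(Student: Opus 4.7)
My plan is induction on the dimension $n$, since condition (I) is defined slicewise in the first coordinate. The base case $n=1$ is immediate: a witness $\widehat E\subset Z_1\cup Z_2\subset\mathbb N$ satisfies $\sum_{k\in \widehat E}1/k=\infty$, and splitting $\widehat E=(\widehat E\cap Z_1)\cup(\widehat E\cap Z_2)$ forces one of the two pieces to have divergent reciprocal sum, giving condition (I) for the corresponding $Z_i$.

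For the inductive step, let $\widehat E\subset(Z_1\cup Z_2)\cap\mathbb N^n$ witness condition (I) for $Z_1\cup Z_2$. For each $a_1\in\pi_1(\widehat E)$ I pass to the slice
\[F(a_1)=\{(b_2,\dots,b_n)\in\mathbb N^{n-1}:(a_1,b_2,\dots,b_n)\in\widehat E\}.\]
The thickness of the fibers of $\widehat E$ translates directly into condition (I) for $F(a_1)$ as a subset of $\mathbb N^{n-1}$: the first-coordinate divergence for $F(a_1)$ is exactly the thickness of the fiber $\widehat E(a_1)$, and higher fibers of $F(a_1)$ are (up to a shift) fibers of $\widehat E$. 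Since $F(a_1)\subset(Z_1)_{a_1}\cup(Z_2)_{a_1}$, where $(Z_i)_{a_1}\subset\mathbb N_0^{n-1}$ is the corresponding slice of $Z_i$, the induction hypothesis yields some $i(a_1)\in\{1,2\}$ for which $(Z_{i(a_1)})_{a_1}$ satisfies condition (I).

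The main obstacle is \emph{uniformity}: $i(a_1)$ may depend on $a_1$, but I need a single $i$ that works on a set of first coordinates still having divergent reciprocal sum. Setting $A_i=\{a_1\in\pi_1(\widehat E):(Z_i)_{a_1}\text{ satisfies condition (I)}\}$, the decomposition $A_1\cup A_2=\pi_1(\widehat E)$ together with $\sum_{a\in\pi_1(\widehat E)}1/a=\infty$ gives, by the same pigeonhole used in the base case, a divergent sum $\sum_{a\in A_i}1/a$ for some $i$; say $i=1$. For each $a_1\in A_1$ I choose a witness $\widehat G(a_1)\subset(Z_1)_{a_1}$ for condition (I) and assemble
\[\widehat E_1=\bigl\{(a_1,\mathbf b):a_1\in A_1,\ \mathbf b\in\widehat G(a_1)\bigr\}\subset Z_1.\]
Tracking definitions, $\widehat E_1$ witnesses condition (I) for $Z_1$: $\pi_1(\widehat E_1)=A_1$ has divergent reciprocal sum, and every nonempty fiber $\widehat E_1(a_1,\dots,a_j)$ with $j\ge1$ agrees (up to a coordinate shift) with a nonempty fiber of $\widehat G(a_1)$ and hence is thick by condition (I) for $\widehat G(a_1)$. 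This closes the induction.
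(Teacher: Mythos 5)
Your proof is correct, but it is organized quite differently from the paper's. You run an induction on the dimension $n$: slice the witness set $\widehat E$ over its first coordinate, observe that each slice $F(a_1)$ is itself a witness for condition (I) in $\mathbb N^{n-1}$ (first-coordinate divergence being exactly the thickness of $\widehat E(a_1)$, higher fibers being shifted fibers of $\widehat E$), apply the inductive hypothesis to the slices of $Z_1$ and $Z_2$, and then use the pigeonhole fact that a divergent sum of reciprocals split into two index sets leaves at least one divergent, to extract a uniform choice $i$ and reassemble a witness $\widehat E_i\subset Z_i$. All the verifications you sketch (that $\pi_1(\widehat E_1)=A_1$, that the fibers of $\widehat E_1$ are thick because they coincide, up to a coordinate shift, with fibers of the chosen witnesses $\widehat G(a_1)$) do go through, since a witness for condition (I) is automatically nonempty. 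The paper instead argues by contraposition in a fixed dimension: assuming $Z_1$ fails condition (I), it decomposes $Z_1=\cup_j F_j$ into layers of thin fibers via a top-down deletion process, then shows that removing each thin layer $F_j$ from the witness $\widehat M$ of $Z_1\cup Z_2$ cannot destroy thickness, so that $\widehat M\setminus Z_1\subseteq Z_2$ still witnesses condition (I). Your route is arguably more transparent and constructive (it exhibits the witness inside $Z_i$ directly and isolates the single analytic fact being used, namely the pigeonhole on divergent reciprocal sums, applied once per level of the induction), while the paper's deletion argument avoids induction on $n$ at the cost of the bookkeeping of the sets $E_j$ and $F_j$. Both yield the $m$-fold statement of Corollary \ref{Cor2} by iterating. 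Only cosmetic slips: in the base case you should write $\widehat E\subset(Z_1\cup Z_2)\cap\mathbb N$ rather than $Z_1\cup Z_2\subset\mathbb N$, and it is worth stating explicitly that each $\widehat G(a_1)$ is taken inside $(Z_1)_{a_1}\cap\mathbb N^{n-1}$ so that $\widehat E_1\subset\mathbb N^n$ as Definition \ref{de1} requires.
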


\begin{proof}
Let  $M=Z_1\cup Z_2$. Since $M$ satisfies condition (I), there exists 
$\widehat{M}\subset (Z_1\cup Z_2)\cap \mathbb{N}^n$  satisfying 
(ii) in Definition \ref{de1}. Assume that $Z_1$ does not satisfy 
condition (I). Then we apply the following process to  $Z_1$:
\begin{itemize}
\item[i.] Set $E_{n}=Z_1$.
\item[ii.] Define $E_{n-1}$ to be the set obtained by deleting all 
thin fibers of the form $E_n(a_1,\dots,a_{n-1})$ from $E_n$, i.e.
\[E_{n-1}=\{(a_1,\dots,a_n)\in E_n:E_n(a_1,\dots,a_{n-1}) \text{ is thick}\}.\] 
\item[iii.] For $1\leq j\leq n-2$, define $E_j$ to be the set obtained by deleting 
all thin fibers of the form $E_{j+1}(a_1,\dots,a_j)$ from $E_{j+1}$, i.e.  
\[E_j=\{(a_1,\dots,a_n)\in E_{j+1}:E_{j+1}(a_1,\dots,a_j) \text{ is thick}\}.\]
\item[iv.] If $E_1\neq \emptyset$, then we set 
$E_0=\begin{cases}
	\emptyset  & \text{ if } \sum_{j\in \pi_1(E_1)}{1}/{j}<\infty,\\ 
E_1 & \text{ otherwise.} 
	\end{cases}$ 
\end{itemize}

By running the above process, all the thin fibers  will be deleted from $Z_1$. 
That is, in step ii. all the remaining non-empty fibers $E_{n-1}(a_1,\ldots,a_{n-1})$ 
will be thick. In step iii. we remove (two dimensional) thin (in the $(n-1)$-st component) 
fibers of the form $E_{n-1}(a_1,\ldots,a_{n-2})$ to get the set $E_{n-2}$. Therefore, if 
$E_{n-2}(a_1,\ldots,a_{n-2})$ is non-empty then it is thick and 
\[E_{n-2}(a_1,\ldots,a_{n-2})=E_{n-1}(a_1,\ldots,a_{n-2}).\] 
When deleting  the two dimensional  fibers from $E_{n-1}$ to obtain $E_{n-2}$, 
a one dimensional fiber $E_{n-1}(a_1,\ldots,a_{n-1})$ in $E_{n-1}$  would either 
be untouched or entirely removed. In other words, if the (one dimensional) 
fiber $E_{n-2}(a_1,\ldots,a_{n-1})$ is non-empty then it is thick and 
\[E_{n-2}(a_1,\ldots,a_{n-1})=E_{n-1}(a_1,\ldots,a_{n-1}).\] 
Therefore, the non-empty fibers $E_{n-2}(a_1,\ldots,a_k)$ are thick for $n-2\leq k\leq n-1$. 
Arguing inductively in step iii., we conclude that the non-empty 
fibers $E_j(a_1,\ldots,a_{k})$ are thick for $1\leq j\leq k\leq n-1$.
It is possible that $E_j$ becomes the empty set for some $j\geq 1$. 
Nevertheless, $E_0$ is either an empty set, or a set with all 
fibers in it being thick  and satisfy $\sum_{j\in \pi_1(E_0)}\frac{1}{j}=\infty.$ 
Since $Z_1$ does not satisfy condition (I), $E_0$ has to be an empty set. 
Thus $Z_1=\cup_{j=1}^n(E_{j}\backslash E_{j-1})$. Set $F_j=E_j\backslash E_{j-1}$ 
for $j=1,\ldots,n$. Then we have the following properties for $F_j$: 
\begin{itemize}
\item [i.] $Z_1=\cup_{j=1}^nF_j$.
\item [ii.] $F_1=E_1$ where $E_1$ either satisfies $\sum_{j\in \pi_1(E_1)}1/j<\infty$ or it is empty.
\item [iii.] For $2\leq j\leq n$, $F_j$ consists of all thin 
fibers in $E_j$ of the form $E_j(a_1,\dots,a_{j-1})$, i.e.
\[F_j=E_j\backslash E_{j-1} 
=\{(a_1,\dots,a_n)\in E_j:E_j(a_1,\dots,a_{j-1}) \text{ is thin}\} \;\;\text{ for }\;\; 2\leq j\leq n.\]
\end{itemize}

Note that if  a fiber $\widehat{M}(a_1,\dots,a_{n-1})$ of $\widehat{M}$ is thick,  
then $\widehat{M}(a_1,\dots,a_{n-1})\backslash F_n(a_1,\dots,a_{n-1})$ is still thick 
since  $F_n(a_1,\dots,a_{n-1})$  is thin. Hence deleting $F_n$ from $\widehat{M}$ 
does not affect the ``thickness" of 
$\widehat{M}(a_1,\dots,a_{n-1})\backslash F_n(a_1,\dots,a_{n-1})$, fibers with their 
first $(n-1)$ components fixed. Moreover, all non-empty fibers of the form 
$\widehat{M}(a_1,\dots,a_{n-2})\backslash F_n(a_1,\dots,a_{n-2})$ are still thick,  
because a thick portion of the fiber survives when we remove a thin fiber. 
To be more precise, if $\widehat{M}(a_1,\dots,a_{n-1})$ is non-empty then it is thick. 
Furthermore, since $F_n(a_1,\dots,a_{n-1})$ is thin, the set 
$\widehat{M}(a_1,\dots,a_{n-1})\backslash F_n(a_1,\dots,a_{n-1})$ 
is non-empty and thick. That is, 
\[\pi_{n-1}(\widehat {M}(a_1,\dots,a_{n-2}))
=\pi_{n-1}(\widehat {M}(a_1,\dots,a_{n-2})\backslash F_n).\] 
Thus the thickness of the fiber corresponding to the $(n-2)$-tuple 
$(a_1,\dots,a_{n-2})$ stays the same. Similarly the thickness of fibers 
with their first $j$ components fixed are not affected for $j\leq n-3$. 
Thus $\widehat M\backslash F_n$ is still a set satisfying (ii) in Definition \ref{de1}. 

Now we turn to show that the set  $(\widehat M\backslash F_n)\backslash F_{n-1}$ 
contains a subset satisfying (ii) of Definition \ref{de1}. The same argument as in 
the previous paragraph yields that, for $j\leq n-2$,  
none of the fibers  with their first $j$ components fixed are entirely deleted from 
$\widehat M\backslash F_n$ and hence all fibers in 
$(\widehat M\backslash F_n)\backslash F_{n-1}$ with their first $j$ components fixed are thick. 
For each $(n-2)$-tuple $\mathbf a^\prime=(a_1,\dots, a_{n-2})$ such that 
$(\widehat M\backslash F_n)(\mathbf a^\prime)\backslash F_{n-1}(\mathbf a^\prime)$ is 
nonempty, we have $(\widehat M\backslash F_n)(\mathbf a^\prime)$  is thick and 
$F_{n-1}(\mathbf a^\prime)$ is thin. Thus 
\[\sum_{j\in \pi_{n-1} ((\widehat M\backslash F_n)(\mathbf a^\prime)
\setminus F_{n-1}(\mathbf a^\prime))}\frac{1}{j}=\infty.\] 
Moreover, for each 
$j\in \pi_{n-1} ((\widehat M\backslash F_n)(\mathbf a^\prime)
\setminus F_{n-1}(\mathbf a^\prime)) $, 
the fiber $((\widehat M\backslash F_n)\backslash F_{n-1})(\mathbf a^\prime,j)$ is thick 
since $F_{n-1}(\mathbf a^\prime,j)$ is empty. Therefore the set 
\[\widehat M_{n-1}:=\{(a_1,\dots,a_n)\in(\widehat M\backslash F_n)
\backslash F_{n-1}: F_{n-1}(a_1,\dots,a_{n-1})=\emptyset\}\]
is a subset of  $(\widehat M\backslash F_n)\backslash F_{n-1}$ satisfying (ii) of  
Definition \ref{de1} which implies that $(\widehat M\backslash F_n)\backslash F_{n-1}$  
satisfies condition (I). 

Similarly, we can show that
$((\widehat{M}\backslash F_n)\backslash F_{n-1})\backslash F_{n-2}$, $\dots$, up to the 
set $\widehat{M}\backslash (\cup_{l=1}^nF_l)$ all satisfy condition (I). Hence 
$\widehat{M}\backslash Z_1$ satisfies condition (I). It follows from the containment  
$Z_2\supseteq \widehat{M}\backslash Z_1$ that condition (I) holds true for the set $Z_2$. 
\end{proof}
As a direct consequence of Lemma \ref{lem1} we have the following corollary. 
\begin{corollary}\label{Cor2}
Let $Z_1, \ldots,Z_m$ be subsets of $\mathbb N^n_0$ such that $\cup_{j=1}^mZ_j$  
satisfies condition (I). Then $Z_j$ satisfies condition (I) for some $1\leq j\leq m$. 
\end{corollary}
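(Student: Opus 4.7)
The statement is a straightforward induction, and despite the paper crediting Lemma \ref{lem1} I suspect the intended reference is Lemma \ref{lem2} (the two-set version of the same splitting property). My plan is to prove Corollary \ref{Cor2} by induction on $m$, with Lemma \ref{lem2} doing all the real work.

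The base case $m=1$ is trivial, and the case $m=2$ is exactly Lemma \ref{lem2}. For the inductive step, assume the corollary holds for any family of $m-1$ subsets of $\mathbb{N}_0^n$ whose union satisfies condition (I), and suppose $Z_1,\ldots,Z_m\subset\mathbb{N}_0^n$ are such that $\bigcup_{j=1}^m Z_j$ satisfies condition (I). I would then group the first $m-1$ sets: let
\[
W=\bigcup_{j=1}^{m-1}Z_j,
\]
so that $W\cup Z_m=\bigcup_{j=1}^m Z_j$ satisfies condition (I). Applying Lemma \ref{lem2} to the pair $(W,Z_m)$, either $Z_m$ satisfies condition (I) (in which case we are done), or $W$ does. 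In the latter case, the inductive hypothesis applied to $Z_1,\ldots,Z_{m-1}$ produces some index $1\leq j\leq m-1$ for which $Z_j$ satisfies condition (I), completing the induction.

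There is essentially no obstacle here beyond invoking Lemma \ref{lem2} correctly; the delicate combinatorial surgery on thick and thin fibers has already been carried out in its proof, and the present corollary only needs a clean inductive bookkeeping step. The one thing I would double-check is that the labelling \textit{Lemma \ref{lem1}} in the sentence preceding the corollary is indeed a typo for \textit{Lemma \ref{lem2}}; otherwise no argument using only Lemma \ref{lem1} (the zero-set statement for bounded holomorphic functions on $\mathbb{H}_+^n$) could yield the purely set-theoretic conclusion of the corollary without implicitly passing through a decomposition lemma of the type of Lemma \ref{lem2}.
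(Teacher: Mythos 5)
Your proof is correct and is exactly the intended argument: the paper offers no written proof, merely calling the corollary a direct consequence of the preceding lemma, and the obvious route is the induction on $m$ via the two-set Lemma~\ref{lem2} that you carry out. You are also right that the reference to Lemma~\ref{lem1} is a typo for Lemma~\ref{lem2}, since only the latter (the set-theoretic splitting lemma) can yield this purely combinatorial conclusion.
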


\section{Proof of Corollary \ref{Cor1} and Theorem \ref{ThmMain}}
We start this section with a simpler version of Theorem \ref{ThmMain}. 
\begin{proposition}\label{Prop1}
Let $\D$ be a bounded Reinhardt domain in $\C^n$ and $\phi_1,\ldots,\phi_m$ 
be bounded quasi-homogeneous symbols on $\D$. Assume that 
$T_{\phi_m}\cdots T_{\phi_1}=0$ on $A^2(\D)$. Then $\phi_j=0$ for some $j$.
\end{proposition}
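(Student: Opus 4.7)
The plan is to diagonalize the Toeplitz operators on the monomial basis of $A^2(\D)$, translate the zero product hypothesis into a combinatorial covering of $\mathbb{N}_0^n$ by zero sets of certain Mellin-type integrals, and then combine Corollary \ref{Cor2} with Lemma \ref{lem1} to force one of the symbols to vanish. Since $\D$ is Reinhardt, $\{z^\alpha:\alpha\in\mathbb N_0^n\}$ is an orthogonal basis of $A^2(\D)$, and writing $\phi_j(re^{i\theta})=f_j(r)e^{i\langle k_j,\theta\rangle}$, a polar-coordinate calculation using the rotational symmetry of $\D$ produces
\[T_{\phi_j}z^\alpha \;=\; \frac{(2\pi)^n\,I_j(\alpha)}{\|z^{\alpha+k_j}\|^2}\,z^{\alpha+k_j},\qquad I_j(\alpha):=\int_{\D^+}f_j(r)\,r^{2\alpha+k_j+(1,\ldots,1)}\,dr,\]
where $\D^+$ is the image of $\D$ under $(z_1,\dots,z_n)\mapsto(|z_1|,\dots,|z_n|)$, and the right-hand side is understood as $0$ whenever $\alpha+k_j\notin\mathbb N_0^n$. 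Setting $K_\ell:=k_1+\cdots+k_\ell$, iteration shows that $T_{\phi_m}\cdots T_{\phi_1}z^\alpha$ is a scalar multiple of $z^{\alpha+K_m}$ whose coefficient is, up to a non-vanishing factor, $\prod_{j=1}^m I_j(\alpha+K_{j-1})$.

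Next I convert the vanishing of this product into combinatorial information. For every $\alpha$ in the admissible set $S:=\{\alpha\in\mathbb N_0^n : \alpha+K_\ell\in\mathbb N_0^n\text{ for }0\le\ell\le m\}$ at least one factor must vanish, so $\alpha+K_{j-1}\in Z_j:=\{\beta\in\mathbb N_0^n:I_j(\beta)=0\}$ for some $j$; equivalently $S\subset\bigcup_{j=1}^m Z_j'$ with $Z_j':=(Z_j-K_{j-1})\cap\mathbb N_0^n$. Because $S$ contains a translated orthant $\alpha_0+\mathbb N_0^n$, the further shift $\alpha_0+(1,\ldots,1)+\mathbb N_0^n\subset\mathbb N^n$ witnesses that $S$ — and hence $\bigcup Z_j'$ — satisfies condition (I). Corollary \ref{Cor2} then yields an index $j_0$ for which $Z_{j_0}'$ itself satisfies condition (I).

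Finally I promote this combinatorial vanishing to analytic vanishing. The Mellin-type integral $F_{j_0}(w):=\int_{\D^+}f_{j_0}(r)r^{2w+k_{j_0}+(1,\ldots,1)}dr$ is holomorphic on the product of half-planes $U_{j_0}=\{w\in\C^n:2\re(w_l)+k_{j_0}(l)+1>-1\}$ and vanishes on $Z_{j_0}$. After rescaling so that $\D\subset\mathbb D^n$ (which makes $|r^{2w+k_{j_0}+(1,\ldots,1)}|\le 1$ on $\D^+$ whenever the real parts of the exponents are non-negative) and translating $w$ by a sufficiently large integer vector $c(1,\ldots,1)$, the shifted function becomes bounded and holomorphic on $\mathbb H_+^n$, and a suitable subset of the shifted $\widehat{Z_{j_0}'}$ supplies zeros in $\mathbb N^n$ that still satisfy condition (I). Lemma \ref{lem1} then forces the shifted function to vanish identically, so $F_{j_0}\equiv 0$ on $U_{j_0}$ by analytic continuation, and in particular $I_{j_0}(\alpha)=0$ for every admissible $\alpha\in\mathbb N_0^n$. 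A moment-problem argument on the bounded set $\D^+$ (via the substitution $u_l=r_l^2$ and Stone-Weierstrass) then yields $f_{j_0}\equiv 0$ a.e., hence $\phi_{j_0}=0$.

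The main obstacle I foresee is precisely the last shifting step: I must verify both that the translated function on $\mathbb H_+^n$ is genuinely bounded — which is why the rescaling into $\mathbb D^n$ matters — and that condition (I) survives the passages $Z_{j_0}\mapsto Z_{j_0}'\mapsto\widehat{Z_{j_0}'}-c(1,\ldots,1)$, so that a legitimate Blaschke-type zero set in $\mathbb N^n$ is available to feed into Lemma \ref{lem1}. The careful bookkeeping of thick fibers under these translations — essentially the content of Lemma \ref{lem2} — is the most delicate part of the argument.
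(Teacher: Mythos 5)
Your proposal is correct and follows essentially the same route as the paper's proof: diagonalize on monomials, reduce $T_{\phi_m}\cdots T_{\phi_1}=0$ to a covering of a translated orthant by the zero sets of Mellin-type integrals, invoke Corollary \ref{Cor2} to single out one $Z_{j_0}$ satisfying condition (I), and finish with Lemma \ref{lem1} plus Stone--Weierstrass. The only cosmetic difference is that the paper absorbs the index shifts into the weight (defining $g_j(t)=f_j(\sqrt t)\,t^{\mathbf k_1+\cdots+\mathbf k_{j-1}+\mathbf k_j/2}$ and multiplying by $t^{\mathbf k_0}$) rather than translating the zero sets, which sidesteps the translation-invariance check you flag at the end.
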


\begin{proof}
Let $\Omega^+$ and $\widetilde{\Omega}^+$ denote the subsets in 
$\mathbb R^n$ defined by 
\begin{align*}
\Omega^+ =&\{(|z_1|,\dots,|z_n|)\in \mathbb R^n:z=(z_1,\dots,z_n)\in \Omega\};\\
\widetilde{\Omega}^+ 
=&\{(x^2_1,\dots,x^2_n)\in \mathbb R^n:(x_1,\dots,x_n)\in \Omega^+\}.
\end{align*}	
Since $\phi_1,\ldots,\phi_m$  are quasi-homogeneous, for  
$z=(r_1e^{i\theta_1},\dots,r_ne^{i\theta_n})\in \D$ and each $j$ we set 
$\phi_j(z)=f_j(r)e^{i\mathbf k_j\cdot \theta}$ where $r=(r_1,\dots,r_n)$, 
$\theta=(\theta_1,\dots,\theta_n)$, and $\mathbf k_j\in \mathbb Z^n$. 
The Bergman kernel function $K_\Omega$ has the expression
\[K_\Omega(z,w)=\sum_{\alpha\in \mathbb Z^n}c_\alpha z^\alpha\bar w^\alpha\]
where $c_\alpha=
\begin{cases}
0& \|z^\alpha\|=\infty\\\|z^{\alpha}\|^{-2}& \text{ otherwise.}
	\end{cases}$ 
A polar coordinates computation yields for $\mathbf k\in \mathbb N^n$ that
\begin{align}\label{1}
T_{\phi_1}z^{\mathbf k} 
&=\int_\Omega \sum_{\alpha\in \mathbb Z^n}
c_\alpha z^\alpha\bar w^\alpha \phi_1(w)w^{\mathbf k}dV(w)
\nonumber\\&= (2\pi)^nc_{\mathbf  k+\mathbf  k_1}
z^{\mathbf  k+\mathbf  k_1}\int_{\Omega^+} f_1(r)
r^{2\mathbf k+\mathbf k_1+\mathbf 1}dV(r)
\nonumber\\&=\pi^n c_{\mathbf  k+\mathbf  k_1}
z^{\mathbf  k+\mathbf  k_1}\int_{\widetilde\Omega^+} 
f_1(\sqrt{t})t^{\mathbf k+(\mathbf k_1/2)}dV(t),
\end{align}
where $\sqrt{t}=(\sqrt{t_1},\dots,\sqrt{t_n})$. Set 
$g_1(t)=f_1(\sqrt{t})t^{\mathbf k_1/2}$ and 
$d_1(\mathbf k)=\pi^n c_{\mathbf  k+\mathbf  k_1}$. Then \eqref{1}
 becomes 
\[T_{\phi_1}z^{\mathbf k} =d_1(\mathbf k)
z^{\mathbf k+\mathbf k_1}\int_{\widetilde \Omega^+}g_1(t)t^{\mathbf k}dV(t).\] 
Similarly,
\begin{align}\label{2}T_{\phi_2}z^{\mathbf k+\mathbf k_1} 
&=\int_\Omega \sum_{\alpha\in \mathbb Z^n}
c_\alpha z^\alpha\bar w^\alpha \phi_2(w)w^{\mathbf k+\mathbf k_1}dV(w)
\nonumber\\&=\pi^n c_{\mathbf  k+\mathbf  k_1+\mathbf k_2}
z^{\mathbf  k+\mathbf  k_1+\mathbf k_2}\int_{\widetilde\Omega^+} 
f_2(\sqrt{t})t^{\mathbf k+\mathbf k_1+(\mathbf k_2/2)}dV(t).
\end{align}
 We also set $g_2(t)= f_2(\sqrt{t})t^{\mathbf k_1+(\mathbf k_2/2)}$ and 
$d_2(\mathbf k)=\pi^nc_{\mathbf  k+\mathbf  k_1+\mathbf k_2}$. Then 
\[T_{\phi_2}T_{\phi_1}z^{\mathbf k} 
=d_1(\mathbf k)d_2(\mathbf k)z^{\mathbf k+\mathbf k_1+\mathbf k_2}
\int_{\widetilde \Omega^+}g_1(t)t^{\mathbf k}dV(t)
\int_{\widetilde \Omega^+}g_2(t)t^{\mathbf k}dV(t).\]
 Repeating this process up to $T_{\phi_m}$, we obtain 
 \[T_{\phi_m}\cdots T_{\phi_1}z^{\mathbf k} 
=z^{\mathbf k}\prod_{j=1}^{m}z^{\mathbf k_j}d_j(\mathbf k)
\int_{\widetilde \Omega^+}g_j(t)t^{\mathbf k}dV(t),\]
 where $d_j(\mathbf k)=\pi^nc_{\mathbf k+\sum_{l=1}^{j}\mathbf k_l}$ 
and $g_j(t)=f_j(\sqrt{t})t^{\mathbf k_1+\cdots+\mathbf k_{j-1}+(\mathbf k_j/2)}$. 
The assumption  $$T_{\phi_m}\cdots T_{\phi_1}=0$$ on $A^2(\Omega)$ then 
implies the equation 
\begin{equation}\label{3}
z^{\mathbf k}\prod_{j=1}^{m}z^{\mathbf k_j}d_j(\mathbf k)
\int_{\widetilde \Omega^+}g_j(t)t^{\mathbf k}dV(t)=0,
\end{equation} 
for all $z^{\mathbf k}\in A^2(\Omega)$. 

Since $\Omega$ is bounded, we have 
$A^2(\Omega)\supseteq\{z^{\mathbf k}\}_{\mathbf  k\in \mathbb N^n}$. 
Moreover, there exists a multi-index $\mathbf k_0\in \mathbb N^n$ 
such that for any $\mathbf k\geq \mathbf k_0$ and any $j$ the constant 
$d_j(\mathbf k)>0$. Thus from \eqref{3} we have 
 \begin{equation}\label{4}
\prod_{j=1}^{m}\int_{\widetilde \Omega^+}g_j(t)t^{\mathbf k}dV(t)=0 
\end{equation}
for any $\mathbf k\geq \mathbf k_0$. Equivalently, 
 \begin{equation}\label{5}
\prod_{j=1}^{m}\int_{\widetilde \Omega^+}(g_j(t)t^{\mathbf k_0+\mathbf 1})
t^{\mathbf k}dV(t)=0
\end{equation}
for any $\mathbf k\in \mathbb N^n$.  Set 
\[Z_j=\left\{\mathbf k\in \mathbb N^n: 
\int_{\widetilde \Omega^+}(g_j(t)t^{\mathbf k_0})t^{\mathbf k}dV(t)=0\right\}.\] 
Then \eqref{5} implies that 
\[\cup_{j=1}^m Z_j=\mathbb{N}^n.\]
Clearly $\mathbb{N}^n$ satisfies condition (I). Then by Corollary \ref{Cor2}, $Z_{j_0}$ 
satisfies condition (I) for some $j_0$. After a change of variables $t=cx$ for sufficiently 
small $c>0$, we may further assume that $\widetilde{\Omega}^+\subseteq [0,1)^n$. 
Then the function 
\[h_j(z)=\int_{\widetilde \Omega^+}(g_j(t)t^{\mathbf k_0})t^{z}dV(t)\] 
is bounded and holomorphic on $\mathbb H^n_+$. Then Lemma  \ref{lem1} implies that 
$h_{j_0}\equiv 0$. Thus $Z_{j_0}=\mathbb N^n$ and, Stone-Weierstrass Theorem 
implies that  $g_{j_0}(t)t^{\mathbf k_0}\equiv0$. Therefore $f_{j_0}\equiv 0$ 
which proves the proposition. 
\end{proof}
As a consequence of  Proposition \ref{Prop1}, we also have the following 
 proof of Corollary \ref{Cor1}.

\begin{proof}[Proof of Corollary \ref{Cor1}] 
Note that the bounded function $\phi_m$  has an $L^2$ expansion
\[\phi_m(r_1e^{i\theta_1},\ldots,r_ne^{i\theta_n}) 
=\sum_{\mathbf p\in \mathbb Z^n}f_{m,\mathbf p}(r_1,\ldots,r_n)e^{i\mathbf p\cdot\theta}\] 
where 
\[f_{m,\mathbf p}(r_1,\dots,r_n)
=\frac{1}{(2\pi)^n}\int_0^{2\pi}\cdots\int_0^{2\pi}
\phi_m(r_1e^{i\theta_1},\ldots,r_ne^{i\theta_n}) 
e^{-i\mathbf p\cdot \theta}d\theta_1\cdots d\theta_n\]
are also bounded (see, for example, \cite[Lemma 2.2]{Le10}  or 
\cite[Lemma 4.1]{DongZhou11}). From the assumption, the symbol functions 
$\phi_1,\dots, \phi_{m-1}$ are bounded and quasi-homogeneous. Thus we 
can set $\phi_j(z)=f_j(r)e^{i\mathbf{k}_j\cdot\theta}$ for  $1\leq j\leq m-1$ 
where $r=(r_1,\dots,r_n)$, 	$\theta_j=(\theta_1,\dots,\theta_n)$, and 
$\mathbf k_j\in \mathbb Z^n$. A similar computation as in the 
proof of Proposition \ref{Prop1} then shows that for any $z^\mathbf k$ 
with $\mathbf k\in \mathbb N^n$, 
\begin{align}\label{EqnCor}
T_{\phi_m}\cdots T_{\phi_1}z^{\mathbf k} 
= &\sum_{\mathbf p\in \mathbb Z^n} z^{\mathbf k+\mathbf p}
d_{m,\mathbf p}(\mathbf k)\int_{\widetilde \Omega^+}g_{m,\mathbf p}(t)
t^{\mathbf k}dV(t)\prod_{j=1}^{m-1}z^{\mathbf k_j}d_j(\mathbf k)
\int_{\widetilde \Omega^+}g_j(t)t^{\mathbf k}dV(t)\\ 
\nonumber =&0,
\end{align}
where 
\begin{align*}
d_{m,\mathbf p}(\mathbf k)=&\pi^nc_{\mathbf k+\mathbf p
+\sum_{l=1}^{m-1}\mathbf k_l},\\
d_j(\mathbf k)=&\pi^nc_{\mathbf k+\sum_{l=1}^{j}\mathbf k_l},\\
g_j(t)=&f_j(\sqrt{t})t^{\mathbf k_1+\cdots+\mathbf k_{j-1}+(\mathbf k_j/2)},\\
g_{m,\mathbf p}(t)=&f_{m,\mathbf p}(\sqrt{t})t^{\mathbf k_1
+\cdots+\mathbf k_{m-1}+(\mathbf p/2)}.
\end{align*}
The first equality in \eqref{EqnCor} holds by the $L^2$ convergence of the series  
$\sum_{\mathbf p\in \mathbb Z^n}f_{m,\mathbf p}(r)e^{i\mathbf p\cdot\theta}$ 
and the boundedness of $z^\mathbf k$. This together with the fact 
that $\mathbf{k}_1,\ldots,\mathbf{k}_{m-1}$ are fixed imply that for any 
$\mathbf k\in \mathbb N^n$ and $\mathbf p\in \mathbb Z^n$ we have 
\[z^{\mathbf k+\mathbf p}d_{m,\mathbf p}(\mathbf k)
\int_{\widetilde \Omega^+}g_{m,\mathbf p}(t)t^{\mathbf k}dV(t)
\prod_{j=1}^{m-1}z^{\mathbf k_j}d_j(\mathbf k)
\int_{\widetilde \Omega^+}g_j(t)t^{\mathbf k}dV(t)=0.\]
Then the argument following \eqref{3} in the proof of Proposition \ref{Prop1} 
yields that either $\phi_j=0$ for some $j\in {1,2,\dots m-1}$ or $f_{m,\mathbf p}=0$. 
If $\phi_j=0$ for some $j\in {1,2,\dots m-1}$, then we are done. Otherwise, 
$f_{m,\mathbf p}=0$ for all $\mathbf p\in \mathbb Z^n$ which yields that $\phi_m=0$.
\end{proof}
Now we turn our attention to proving Theorem \ref{ThmMain}. 
We introduce the following notation which will be used in the proof. 
For multi-indices $\mathbf{a}=(a_1,\ldots,a_n)$ and 
$\mathbf{b}=(b_1,\ldots,b_n)$ we say $\mathbf{a}\leq \mathbf{b}$ if 
$a_j\leq b_j$ and  $\mathbf{a}<  \mathbf{b}$ if $a_j< b_j$ for all $j$. 
When $\mathbf{a}< \mathbf{b}$ the \textit{box}  
\[R=\{\mathbf k\in \mathbb Z^n: a_j\leq k_j<b_j \text{ for }j=1,2,\dots,n\}\]
is said to have dimension $\mathbf{d}=(d_1,\ldots, d_n)$ where $d_j=b_j-a_j$  
for $j=1,2,\dots,n$. That is, $d_j$ is the size of $R$ in the $j$th component. 

\begin{proof}[{Proof of Theorem \ref{ThmMain}}] 
Since $\phi_j$s are finite sums of bounded quasi-homogeneous functions, 
we may assume that for $j=1,\ldots, m$
\begin{align*}
\phi_j(z)&=\sum_{\mathbf k\in R_j}\phi_{j,\mathbf k}(z).
\end{align*}
where $\phi_{j,\mathbf k}(z)=f_{j,\mathbf k}(r)e^{i\mathbf k\cdot \theta}$
are bounded 
quasi-homogeneous functions and $R_j$ is a box in $\mathbb Z^n$ of dimensions 
$\mathbf d(j)=(d_1(j),\dots,d_n(j))$. We will prove the theorem using 
an induction on the dimensions of $R_1,\ldots, R_m$.

Let $\mathbf 1_l$ denote the multi-index in $\mathbb Z^n$ whose $l$th entry 
is 1 and every other entry is 0. When all of $R_j$s are of dimension at 
most $(1,\dots,1)$, the symbols $\phi_j$  are bounded quasi-homogeneous. 
Then the statement of Theorem  \ref{ThmMain} follows by Proposition \ref{Prop1}. 

Next, suppose the statement is proved when all of $R_j$s have dimensions less than 
or equal to $\mathbf d(j)\geq (1,\ldots,1)$. We claim the same result holds true 
when  for some $j_0$, $R_{j_0}$ has dimensions $\mathbf d^\prime(j_0)$ where
$\mathbf d^\prime (j_0)=\mathbf d(j_0)+\mathbf 1_l$ while the 
dimensions of all  other $R_j$s are still  less than or equal to 
$\mathbf d(j)\geq (1,\ldots,1)$.

Without loss of generality, we may assume that   $l=1$ as the other 
cases can be proven by the exact same argument. Since 
$\phi_j(z)=\sum_{\mathbf k\in R_j}\phi_{j,\mathbf k}(z)$, we have 
\begin{align*}
T_{\phi_m} \cdots T_{\phi_1}=\sum_{\mathbf k_1\in R_1}\cdots\sum_{\mathbf k_m\in R_m}
T_{\phi_{m,\mathbf k_m}}\cdots T_{\phi_{1,\mathbf k_1}}.
\end{align*}
Using the same notation as in the proof of Proposition \ref{Prop1}, it follows that for 
$z^{\mathbf k}\in A^2(\Omega)$,
\begin{align}
0=&\sum_{\mathbf k_1\in R_1}\cdots\sum_{\mathbf k_m\in R_m}
T_{\phi_{m,\mathbf k_m}}\cdots T_{\phi_{1,\mathbf k_1}}z^{\mathbf k}\nonumber\\ 
\label{6}=&\sum_{(\mathbf  k_1,\dots,\mathbf k_m)\in R_1\times\cdots\times R_m} 
z^{\mathbf k}\prod_{j=1}^{m}z^{\mathbf k_j}d_j(\mathbf k)
\int_{\widetilde \Omega^+}g_{j,\mathbf k_j}(t)t^{\mathbf k}dV(t).
\end{align}
Suppose $R_s=\{\alpha\in \mathbb Z^n: a_{sj}\leq\alpha_j< b_{sj}+1 \text{ for } 1\leq j\leq n\}$ 
for $s=1\ldots,m$. Then the dimensions of $R_s$s imply that  
$b_{j_01}-a_{j_01}=d^\prime_1(1)-1=d_1(j_0)$ and $b_{s1}-a_{s1}=d_1(s)-1$ for $s\neq j_0$.
Thus the monomial $z^{\mathbf k}\prod_{j=1}^{m}z^{\mathbf k_j}$ contains the  
factor $z_1^{k_1}\prod_{j=1}^{m}z_1^{b_{j1}}$ only if the first entries of $\mathbf k_j$ 
are $b_{j1}$ respectively (because $\alpha_1\leq b_{j1}$ when $\alpha\in R_j$). We set 
\begin{align*}
R_j^1&=\{\alpha\in R_j: \alpha_{1}=b_{j1}\},
\end{align*} 
and define  
\[\phi^1_j(z)=\sum_{\mathbf k\in R^1_j}\phi_{j,\mathbf k}(z).\]
Then \eqref{6} implies that 
\begin{align}\label{Eqn6-1}
\sum_{(\mathbf k_1,\ldots,\mathbf k_m)\in R^1_1\times \cdots\times R^1_m} 
z^{\mathbf k}\prod_{j=1}^{m}z^{\mathbf k_j}d_j(\mathbf k)
\int_{\widetilde \Omega^+}g_{j,\mathbf k_j}(t)t^{\mathbf k}dV(t)=0,
\end{align}
or equivalently $T_{\phi^1_m}\cdots T_{\phi^1_1}=0$ on $A^2(\Omega)$, since 
the terms  in \eqref{Eqn6-1} have the same factor 
$z_1^{k_1}\prod_{j=1}^{m}z_1^{b_{j1}}$, the largest 
power of  $z_1$ in \eqref{6}. Because $R^1_j$ has dimensions less than 
or equal to $\mathbf d(j)$, the induction hypothesis  implies that 
$\phi^1_j=0$ for some $j$. 

If $\phi^1_{j_0}=0$, then 
\[\phi_{j_0}(z)=\sum_{\mathbf p\in R_{j_0}\backslash R^1_{j_0}}\phi_{j_0,\mathbf p}(z).\] 
Note that  $R_{j_0}\backslash R^1_{j_0}$ is a box of dimensions  equal to 
$\mathbf d(j_0)$ and hence the induction hypothesis then yields that $\phi_j=0$ 
for some $j$. On the other hand, if $\phi^1_{j_1}=0$ for some 
$j_1\neq j_0$, then the dimensions of $R_{j_1}$ is reduced and 
\[\phi_{j_1}(z)=\sum_{\mathbf p\in R_{j_1}\backslash R^1_{j_1}}\phi_{j_1,\mathbf p}(z)\] 
where 
\[R_{j_1}\backslash R_{j_1}^1=\{\alpha\in R_{j_1}: a_{j_11}\leq \alpha_{1}< b_{j_11} 
\}.\]
We set 
$R^2_{j_1}=\{\alpha\in R_{j_1}:  \alpha_{1}= b_{j_11}-1 \}$ and define  
\[\phi^2_{j_1}(z) =\sum_{\mathbf k\in R^2_{j_1}}\phi_{j_1,\mathbf k}(z).\] 

Now we consider the equation 
$T_{\phi^1_m}\cdots T_{\phi^2_{j_1}}\cdots T_{\phi^1_1}=0$ on $A^2(\Omega)$.
By the same argument, we have either $\phi^2_{j_1}=0$ or $\phi^1_j=0$ for some 
$j\neq j_1$. Again, if $\phi^1_{j_0}=0$, then it follows, from the induction 
hypothesis applied to $T_{\phi_m} \cdots T_{\phi_1}$,  
that $\phi_j=0$ for some $j$. If $\phi^1_j=0$ for some $j\neq j_0, j_1$ 
then $R_j$ can be reduced to $R_j\backslash R^1_j$. If  $\phi^2_{j_1}=0$, 
then the dimension of $R_{j_1}$ will be further reduced. 

Repeating this process, we would either have $\phi^1_{j_0}=0$ at some step 
which completes the proof, or the process continues. At every 
step, one of $R_j$s will have its dimensions decreased by $1$ along certain 
coordinate direction. Since each $R_j$ has finite dimensions, some $R_j$ 
will be reduced to an empty set  after finitely many steps. This would imply 
that the corresponding symbol $\phi_j=0$ which also completes the proof. 
\end{proof}



\begin{thebibliography}{CLNZ07}

\bibitem[A{\v{C}}01]{AhernCuckovic01}
Patrick Ahern and {\v{Z}}eljko {\v{C}}u\v{c}kovi\'{c}, \emph{A theorem of
	{B}rown-{H}almos type for {B}ergman space {T}oeplitz operators}, J. Funct.
Anal. \textbf{187} (2001), no.~1, 200--210.

\bibitem[A{\v{C}}04]{AhernCuckovic04}
\bysame, \emph{Some examples related to the {B}rown-{H}almos theorem for the
	{B}ergman space}, Acta Sci. Math. (Szeged) \textbf{70} (2004), no.~1-2,
373--378.

\bibitem[AV09]{AlemanVukotic09}
Alexandru Aleman and Dragan Vukoti\'{c}, \emph{Zero products of {T}oeplitz
	operators}, Duke Math. J. \textbf{148} (2009), no.~3, 373--403.

\bibitem[BH64]{BrownHalmos63/64}
Arlen Brown and P.~R. Halmos, \emph{Algebraic properties of {T}oeplitz
	operators}, J. Reine Angew. Math. \textbf{213} (1963/64), 89--102.

\bibitem[BL11]{BauerLe11}
Wolfram Bauer and Trieu Le, \emph{Algebraic properties and the finite rank
	problem for {T}oeplitz operators on the {S}egal-{B}argmann space}, J. Funct.
Anal. \textbf{261} (2011), no.~9, 2617--2640.

\bibitem[CK06]{ChoeKoo06}
Boorim Choe and Hyungwoon Koo, \emph{Zero products of {T}oeplitz operators with
  harmonic symbols}, J. Funct. Anal. \textbf{233} (2006), no.~2, 307--334.

\bibitem[CLNZ07]{ChoeLeeNamZheng07}
Boo~Rim Choe, Young~Joo Lee, Kyesook Nam, and Dechao Zheng, \emph{Products of
	{B}ergman space {T}oeplitz operators on the polydisk}, Math. Ann.
\textbf{337} (2007), no.~2, 295--316.

\bibitem[{\c{C}}Z16]{CelikZeytuncu16}
Mehmet {\c{C}}elik and Yunus~E. Zeytuncu, \emph{Nilpotent {T}oeplitz operators
	on {R}einhardt domains}, Rocky Mountain J. Math. \textbf{46} (2016), no.~5,
1395--1404.

\bibitem[DZ11]{DongZhou11}
Xing-Tang Dong and Ze-Hua Zhou, \emph{Algebraic properties of {T}oeplitz
	operators with separately quasihomogeneous symbols on the {B}ergman space of
	the unit ball}, J. Operator Theory \textbf{66} (2011), no.~1, 193--207.

\bibitem[Le10]{Le10}
Trieu Le, \emph{Compact {H}ankel operators on generalized {B}ergman spaces of
	the polydisc}, Integral Equations Operator Theory \textbf{67} (2010), no.~3,
425--438.

\bibitem[Rem98]{RemmertBook}
Reinhold Remmert, \emph{Classical topics in complex function theory}, Graduate
  Texts in Mathematics, vol. 172, Springer-Verlag, New York, 1998, Translated
  from the German by Leslie Kay.

\end{thebibliography}
\end{document}